\newdimen\plusheight
\def\+{\;\lower\plusheight\hbox{$+$}\;}
\newdimen\minusheight
\def\-{\;\lower\minusheight\hbox{$-$}\;}
\newdimen\cdotsheight
\def\cds{\lower\cdotsheight\hbox{$\cdots$}}
\numberwithin{equation}{section}
\theoremstyle{plain}
\newtheorem{theorem}{Theorem}[section]
\newtheorem{lemma}{Lemma}[section]
\newtheorem{example}{Example}[section]
\newtheorem{definition}{Definition}[section]
\newtheorem{proposition}{Proposition}[section]
\newtheorem{remark}{Remark}[section]
\newtheorem{note}{Note}[section]
\def\mytitle#1{\setcounter{equation}{0}
\setcounter{footnote}{0}
\begin{flushleft}\Large\textbf{#1}\end{flushleft}
\vspace{0.20cm}}
\def\myname#1{\leftline{{\large #1}}\vspace{-0.13cm}}
\def\myplace#1#2{\small\begin{flushleft}\textit{#1}\\
\texttt{#2}\end{flushleft}}
\def\myclassification#1{\small\noindent
Keywords : weak $I^K$-Convergence, weak* $I^K$-Convergence, Condition AP($I,K$), weak $I^K$-Limit Points, weak* $I^K$-Limit Points.\\
AMS subject classification: Primary: 40A35; Secondary: 40H05
       #1\vspace{0.5cm}}
\begin{document}
\mytitle{Weak and weak* $I^K$-convergence in normed spaces}

\myname{$Amar Kumar Banerjee^{\dag}$\footnote{akbanerjee1971@gmail.com, akbanerjee@math.buruniv.ac.in} and  $Mahendranath~ Paul^{\dag}$\footnote{mahendrabktpp@gmail.com}}
\myplace{$\dag$Department of Mathematics, The University of Burdwan, Purba Burdwan -713104, W.B., India.} {}
\begin{abstract}
The main object of this paper is to study the concept of weak $I^K$-convergence, a generalization of weak $I^*$-convergence of sequences in a normed space, introducing the idea of weak* $I^K$-convergence of sequences of functionals where $I,K$ are two ideals on $\mathbb{N}$, the set of all positive integers. Also we have studied the ideas of weak $I^K$ and weak* $I^K$-limit points to investigate the properties in the same space.  
\end{abstract}
\myclassification{}
\section{Introduction}
The idea of statistical convergence, an extended form of ordinary convergence, based on the concept of natural density of subsets of $\mathbb{N}$, was introduced independently by Steinhaus\cite{28} and by Fast\cite{17}. Over the years and under different forms of statistical convergence turned out to be one of the most active research areas in the summability theory after the remarkable works of Friday\cite{18,19} and salat\cite{23,27}. In $2000$, Cannor et al\cite{12} introduced the  idea of weak statistical convergence which has been used to characterize Banach spaces with seperable duals. Last few years some basic properties of this concept were studied by many authors in \cite{2,25}. Recently the concept of weak* statistical convergence of sequence of functionals has been given by I. Bala \cite{1}.\\
In 2001, Kostyrko et al \cite{21} extended the idea of statistical convergence into $I$ and $I^*$-convergence which depends on the structure of the ideals $I$ of $\mathbb{N}$. The mutual relation between $I$ and $I^*$-convergence was given in \cite{22} using the condition AP of the ideals.(such ideals are often called P-ideals\cite{3}). Later many works on ideal convergence have been done in \cite{4,5,6,9,13,15}. In 2010, Pehlivan et al\cite{26} introduced the idea of weak $I$ and $I^*$-convergence  in a normed space and using the condition AP, they established a relation between such types convergence. In 2012, Bhardwaj et al \cite{2.1} extended the idea of weak* statistical convergence to weak* ideal convergence of sequence of functionals and shew that the ideas of weak ideal convergence and weak* ideal convergence are identical in a reflexive Banach space.\\
In 2010, M. Macaj et al \cite{24} gave the idea of $I^K$-convergence which is common generalization of all types of $I^*$-convergence. They have shown that if the ideal $I$ has additive property with respect to an another ideal $K$ (i.e. if condition AP$(I,K)$ holds) then $I$-convergence implies $I^K$-convergence. Very recently more results and applications of $I^K$-convergence have been carried out \cite{24.1,11,12}. It seems therefore reasonable to think if we extend the idea of weak and weak* convergence using double ideals in a normed space and in that case we intend to investigate how far several basic properties are affected.\\
In our paper, we start by giving the idea of weak $I^K$-convergence of sequences which is a common generalization of weak $I^*$-convergent sequences as defined in\cite{26} and we present an interrelation between weak $I$ and weak $I^K$-convergence using the condition AP($I,K$). Next we introduce the concept of weak* $I^K$-convergence for a sequence of functionals and observe that the ideas of weak and weak* $I^K$-convergence of sequences of functionals are same in a reflexive Banach space. In the last section of this paper we give the notion of weak $I^K$-limit point of sequences and weak* $I^K$-limit points of sequences of functionals. Since the importance of the notion of weak and weak* convergence in functional analysis is very significant, we realize that the ideas of weak and weak* $I^K$-convergence in a normed space give more general frame for functional analysis to study summability theory as well.
\section{Basic definitions and notations}
Throughout the paper, we use $\mathbb{N}$ to denote the set of all positive integers and $X$ for a normed linear space and $X^*$ for dual of $X$. First recall that a subset $A$ of $\mathbb{N}$ is said to have natural density $d(A)$ if $d(A)=\displaystyle{\lim_{n}}\frac{1}{n}\sum_{k=1}^{n}\chi_A(k)$, provided the limit exists where $\chi_A$ is characteristic function of $A\subset\mathbb{N}$.
\begin{definition}\cite{17}
A sequence $\{x_n\}$ in $X$ is said to be statistically convergent to $l$ if for every $\epsilon>0$ the set $K(\epsilon)=\{k\in\mathbb N:||x_k-l||\geq \epsilon\}$ has natural density zero.
\end{definition}
\begin{definition}\cite{12}
Let $X$ be a normed linear space then a sequence $\{x_n\}_{n\in\mathbb{N}}$ in $X$ is said to be weak statistically convergent to $x\in X$ provided that for any $f\in X^*$, the sequence $\{f(x_n-x)\}_{n\in\mathbb{N}}$ is statistically convergent to $0$. In this case we write $w$-st-$\displaystyle{\lim_{n\rightarrow\infty}}{x_n}=x$.
\end{definition}
\begin{definition}
Let $S$ be a non empty set and a class $I\subset 2^S$ of subset of $S$ is said to an ideal if\\
(i)$A,B\in I$ implies $A\cup B\in I$ and 
(ii)$A\in I, B\subset A$ implies $B\in I$. 
\end{definition}
$I$ is said nontrivial ideal if $S\notin I$ and $ I\neq \{\phi\} $. In view of condition (ii) $\phi\in I $. If $I \subsetneqq 2^S$ we say that $I$ is proper ideal on $S$. A nontrivial ideal $I$ is said admissible if it contains all the singletons of $S$. A nontrivial ideal $I$ is said non-admissible if it is not admissible.
\begin{definition}
Let $F$ be a class of subsets of non-empty set $S$. Then $F$ is said to be a filter in $S$ if \\
(i) $\phi\notin F$,
(ii)$A,B\in F$ implies $A\cap B\in F$ and
(iii)$A\in F, A\subset B$ implies $B\in F$.
\end{definition}
If $I$ is a non-trivial on a non-void set $S$ then $F=F(I)=\{A\subset S:S\setminus A \in I \}$ is clearly a filter on $S$ and conversely. Again $F(I)$ is said associated filter with respect to ideal $I$.
\begin{definition}\cite{21}
A sequence $\{x_n\}_{n\in \mathbb{N}}$ in $X$ is said to be $I$-convergent to $x$ if for any $\epsilon>0$ the set $A(\epsilon)=\{n\in\mathbb{N}:||x_n-x||\geq \epsilon\}\in I$. In this case we write $I-\displaystyle{\lim_{n\rightarrow\infty}}{x_n}=x$.
\end{definition}
\begin{definition}\cite{26}
A sequence $\{x_n\}_{n\in \mathbb{N}}$ in $X$ is said to be weak $I$-convergent to $x\in X$ if for any $\epsilon>0$ and for any $f\in X^*$ the set $A(f,\epsilon)=\{n\in\mathbb{N}:|f(x_n)-f(x)|\geq \epsilon\}\in I$. In this case we write $w-I-\displaystyle{\lim_{n\rightarrow\infty}}{x_n}=x$.
\end{definition}

\begin{note}
It is easy to observe that weak $I$-limit of a weak $I$-convergent sequence is unique and moreover for an admissible ideal $I$, weak convergence implies weak $I$-convergence with the same limit point but converse part is not true which has been shown in paper\cite{26} by an interesting examples.
\end{note}
\begin{note}\label{1}
It is obvious that if two ideals $I_1,I_2$ on $\mathbb{N}$ such that $I_1\subseteq I_2$ then for a sequence $\{x_n\}$ $w$-$I_1$-$\lim{x_n}=x$ implies $w$-$I_2$-$\lim{x_n}=x$.
\end{note}
\section{Weak $I^K$-convergence}
We have already mentioned that our aim to generalize the notion of weak $I^*$-convergence of sequences. We need to modify this definition introduced in\cite{26}.
\begin{definition}(cf.\cite{26})
A sequence $\{x_n\}_{n\in \mathbb{N}}$ in $X$ is said to be weak $I^*$-convergent to $x\in X$ if there exists a set $M\in F(I)$ such that the sequence $\{y_n\}_{n\in \mathbb{N}}\subset X$ defined by
\[y_n=\left\{\begin {array}{ll}
        x_n & \mbox{if $n\in M$} \\
		x & \mbox{if $n\notin M$}
		\end{array}
		\right. \]
is weak-convergent to $x$. we denote it by the notation $w$-$I^*$-$\lim x_n=x$.
\end{definition} 
\begin{definition} (cf.\cite{24})
Let $I,K$ be two ideals on the set $\mathbb{N}$. A sequence $\{x_n\}_{n\in \mathbb{N}}$ in $X$ is said to be weak $I^K$-convergent to $x\in X$ if there exists a set $M\in F(I)$ such that the sequence $\{y_n\}_{n\in \mathbb{N}}\subset X$ defined by
\[y_n=\left\{\begin {array}{ll}
        x_n & \mbox{if $n\in M$} \\
		x & \mbox{if $n\notin M$}
		\end{array}
		\right. \]
is weak $K$-convergent to $x$. we denote it by the notation $w$-$I^K$-$\lim x_n=x$.
\end{definition} 

\begin{remark}
We can give an equivalent definition of weak-$I^K$-convergence in the following way: if there exists an $M\in F(I)$ such that the sequence $\{x_n\}_{n\in M}$ is weak-$K|_M$-convergent to $x$ where $K|_M=\{B\cap M:B\in K\}$.
\end{remark}
\begin{lemma}\label{0}
If $I$ and $K$ are ideals on $\mathbb{N}$, the set of all positive integers and if $\{x_n\}_{n\in\mathbb{N}}$ is a sequence in $X$ such that $w$-$K$-$\lim\{x_n\}=x$, then $w$-$I^K$-$\lim\{x_n\}=x$.
\end{lemma}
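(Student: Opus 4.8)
The plan is to verify the definition of weak $I^K$-convergence directly, by exhibiting the required witnessing set $M\in F(I)$ in the simplest possible way, with no analysis involved. The only structural fact needed is that $\mathbb{N}\in F(I)$ for every ideal $I$ on $\mathbb{N}$: indeed $\emptyset\in I$ always (by part (ii) of the definition of an ideal), and since $\mathbb{N}\setminus\mathbb{N}=\emptyset\in I$, the associated filter $F(I)=\{A\subset\mathbb{N}:\mathbb{N}\setminus A\in I\}$ contains $\mathbb{N}$.

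Next I would take $M=\mathbb{N}$ in the definition of weak $I^K$-convergence. With this choice the auxiliary sequence $\{y_n\}_{n\in\mathbb{N}}$ built from $\{x_n\}$ and $x$ reads $y_n=x_n$ for every $n\in M=\mathbb{N}$, so $\{y_n\}_{n\in\mathbb{N}}$ coincides with $\{x_n\}_{n\in\mathbb{N}}$. By hypothesis $w$-$K$-$\lim\{x_n\}=x$, hence $\{y_n\}_{n\in\mathbb{N}}$ is weak $K$-convergent to $x$, and since $M\in F(I)$ this is exactly what the definition of $w$-$I^K$-$\lim\{x_n\}=x$ demands. Alternatively one can invoke the reformulation in the preceding Remark: for $M=\mathbb{N}$ one has $K|_M=K$, and the ``subsequence'' $\{x_n\}_{n\in M}$ is the whole sequence, which is weak-$K|_M$-convergent to $x$ by assumption.

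There is no genuine obstacle here; the entire content is the observation that $\mathbb{N}\in F(I)$, which makes ``no thinning at all'' an admissible choice of $M$, so the argument is one line once that filter membership is recorded. It is worth noting that the conclusion needs no hypotheses on $I$ (admissibility, the AP$(I,K)$ condition, etc.) and no relation between $I$ and $K$; this lemma is simply the weak analogue of the elementary fact, already implicit in \cite{24}, that $K$-convergence implies $I^K$-convergence.
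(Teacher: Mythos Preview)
Your argument is correct and matches the paper's own proof exactly: the paper simply says the result follows from the definition of weak $K$-convergence by taking $M=\mathbb{N}\in F(I)$ and $y_n=x_n$. Your additional justification that $\mathbb{N}\in F(I)$ because $\emptyset\in I$ is fine but more detail than the paper provides.
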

The proof follows from the definition of weak $K$-convergence taking $M=\mathbb{N}\in F(I)$ and $y_n=x_n$.
\begin{proposition}
Let $X$ be normed space and $I$ be an ideal on $\mathbb{N}$. A sequence $\{x_n\}_{n\in \mathbb{N}}\subset X$ is weak $I^I$-convergent to $x$ if and only if it is weak $I$-convergent to $x$.
\end{proposition}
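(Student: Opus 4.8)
The plan is to establish the two implications separately, observing that one direction is immediate. For the ``if'' direction, assume $w$-$I$-$\lim x_n=x$; since $\mathbb{N}\in F(I)$, Lemma~\ref{0} with $K=I$ (equivalently, taking $M=\mathbb{N}$ and $y_n=x_n$ in the definition of weak $I^I$-convergence) yields at once that $w$-$I^I$-$\lim x_n=x$. So the substance of the proposition lies in the forward (``only if'') implication.

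For that direction, I would start from a witness set $M\in F(I)$ for which the auxiliary sequence $\{y_n\}$, given by $y_n=x_n$ for $n\in M$ and $y_n=x$ for $n\notin M$, is weak $I$-convergent to $x$. Fixing an arbitrary $f\in X^*$ and $\epsilon>0$, I would compare the two ``bad sets'' $A(f,\epsilon)=\{n\in\mathbb{N}:|f(x_n)-f(x)|\geq\epsilon\}$ and $B(f,\epsilon)=\{n\in\mathbb{N}:|f(y_n)-f(x)|\geq\epsilon\}$. The key point is that $\{y_n\}$ differs from $\{x_n\}$ only on the complement of $M$, and there $f(y_n)-f(x)=0$, so no index of $\mathbb{N}\setminus M$ lies in $B(f,\epsilon)$; hence $A(f,\epsilon)\subseteq B(f,\epsilon)\cup(\mathbb{N}\setminus M)$.

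To finish, I would invoke the ideal axioms: $B(f,\epsilon)\in I$ because $\{y_n\}$ is weak $I$-convergent to $x$, and $\mathbb{N}\setminus M\in I$ because $M\in F(I)$, so their union lies in $I$ and therefore so does the subset $A(f,\epsilon)$. Since $f\in X^*$ and $\epsilon>0$ were arbitrary, this gives $w$-$I$-$\lim x_n=x$, as required.

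I do not anticipate a genuine obstacle here; the only step needing a little care is the inclusion $A(f,\epsilon)\subseteq B(f,\epsilon)\cup(\mathbb{N}\setminus M)$, which must be justified by noting precisely that replacing $x_n$ by $x$ off $M$ makes the functional difference vanish identically there, so those indices can never contribute to $B(f,\epsilon)$. It is also worth remarking that this argument is special to the case $K=I$: for general $K$ the inclusion only yields membership in the ideal generated by $I\cup K$, which is why weak $I^K$-convergence need not imply weak $I$-convergence in general.
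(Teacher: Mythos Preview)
Your proof is correct and follows essentially the same approach as the paper: the ``if'' direction via Lemma~\ref{0} is identical, and for the ``only if'' direction both arguments reduce to the inclusion $A(f,\epsilon)\subseteq B(f,\epsilon)\cup(\mathbb{N}\setminus M)$ with both pieces in $I$. The only cosmetic difference is that the paper phrases the forward direction using the equivalent $K|_M$ formulation and works with the ``good sets'' in the filter $F(I)$, whereas you work directly with the $\{y_n\}$ definition and the ``bad sets'' in the ideal $I$; taking complements turns one argument into the other.
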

\begin{proof}
Let $\{x_n\}$ be weak $I^I$-convergent to $x$ then there exists an $M\in F(I)$ such that the sequence $\{x_n\}_{n\in M}$ is weak-$I|_M$-convergent to $x$. So there exists $G\in F(I)$ such that $\{n\in \mathbb{N}:|f(x_n)-f(x)|<\epsilon\}\cap M=G\cap M$. Clearly $G\cap M\in F(I)$ and $\{n\in \mathbb{N}:|f(x_n)-f(x)|<\epsilon\}\supseteq G\cap M$. Therefore $\{n\in \mathbb{N}:|f(x_n)-f(x)|<\epsilon\}\in F(I)$ i.e. $\{x_n\}$ is weak $I$-convergent to $x$.\\
Converse part follows from lemma \ref{0} taking $K=I$.
\end{proof}
\begin{proposition}
Let $X$ be a normed space and $I,I_1,K$ and $K_1$ be ideals on $\mathbb{N}$ such that $I\subseteq I_1$ and $K\subseteq K_1$. Then for any sequence $\{x_n\}_{n\in\mathbb{N}}$, we have\\
(i) $w$-$I^K$-$\lim x_n=x ~~ \Rightarrow ~~ w$-$I_1^K$-$\lim x_n=x$ and\\
(ii) $w$-$I^K$-$\lim x_n=x ~~ \Rightarrow ~~ w$-$I^{K_1}$-$\lim x_n=x$.
\end{proposition}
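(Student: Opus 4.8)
The plan is to prove both implications directly from the definition of weak $I^K$-convergence, leaning on just two elementary facts: a larger ideal has a larger associated filter, and weak $K$-convergence is monotone in $K$ (Note \ref{1}). First I would treat part (i). Assume $w$-$I^K$-$\lim x_n=x$, so there is a set $M\in F(I)$ for which the modified sequence $\{y_n\}$ (equal to $x_n$ on $M$ and to $x$ off $M$) is weak $K$-convergent to $x$. The key observation is that $I\subseteq I_1$ forces $F(I)\subseteq F(I_1)$: if $A\in F(I)$ then $\mathbb{N}\setminus A\in I\subseteq I_1$, hence $A\in F(I_1)$. Consequently the very same witnessing set $M$ lies in $F(I_1)$ and the same sequence $\{y_n\}$ is still weak $K$-convergent to $x$, which is exactly what is required for $w$-$I_1^K$-$\lim x_n=x$.

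For part (ii), again pick $M\in F(I)$ with $\{y_n\}$ weak $K$-convergent to $x$. Since $K\subseteq K_1$, Note \ref{1} applied to the sequence $\{y_n\}$ gives that $\{y_n\}$ is also weak $K_1$-convergent to $x$. The set $M$ is unchanged and still belongs to $F(I)$, so the pair $\big(M,\{y_n\}\big)$ now witnesses $w$-$I^{K_1}$-$\lim x_n=x$. (Alternatively, one could phrase this via the equivalent formulation in the Remark, noting that $K|_M\subseteq K_1|_M$.)

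I do not expect any genuine obstacle: the proposition simply repackages the monotonicity of $F(\cdot)$ in the ideal together with Note \ref{1}. The only place where a little care is needed is the direction of the filter inclusion — a larger ideal yields a larger filter, which is easy to state backwards. Combining (i) and (ii) also yields at once the natural corollary that $I\subseteq I_1$ and $K\subseteq K_1$ imply $w$-$I^K$-$\lim x_n=x\ \Rightarrow\ w$-$I_1^{K_1}$-$\lim x_n=x$, and (i) specialised to $I_1=2^{\mathbb{N}}$ together with Lemma \ref{0} recovers the consistency of the notion with weak $I$- and weak $K$-convergence.
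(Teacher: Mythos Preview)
Your argument is correct and follows essentially the same route as the paper: both parts rest on the filter inclusion $F(I)\subseteq F(I_1)$ for (i) and on Note~\ref{1} for (ii). The only cosmetic difference is that for (i) the paper phrases the hypothesis via the equivalent $K|_M$-formulation from the Remark, whereas you work directly with the modified sequence $\{y_n\}$; the content is identical.
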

\begin{proof}
(i) Now as $w$-$I^K$-$\lim x_n=x$ so there exists an $M\in F(I)$ such that the sequence $\{x_n\}_{n\in M}$ is weak-$K|_M$-convergent to $x$ where $K|_M=\{B\cap M:B\in K\}$. Here $M\in F(I)\subseteq F(I_1)$ as $I\subseteq I_1$. So obviously $w$-$I_1^K$-$\lim x_n=x$.\\
(ii) Again $w$-$I^K$-$\lim x_n=x$ then there exists a set $M\in F(I)$ such that the $\{y_n\}\in X$ given by  
\[y_n=\left\{\begin {array}{ll}
        x_n & \mbox{if $n\in M$} \\
		x & \mbox{if $n\notin M$}
		\end{array}
		\right. \]
is weak $K$-convergent to $x$. Since $K\subseteq K_1$ and from the note $\ref{1}$ we get $\{y_n\}$ is weak $K$-convergent to $x$. Hence $w$-$I^{K_1}$-$\lim x_n=x$.
\end{proof}
\begin{theorem}\label{ab}
Let $I$ and $K$ be ideals on $\mathbb{N}$ and $\{x_n\}_{n\in\mathbb{N}}$ be a sequence in $X$ then \\
 (i) $w$-$I$-$\lim x_n=x ~~ \Rightarrow ~~ w$-$I^K$-$\lim x_n=x$ if $I\subseteq K$.
(ii) $w$-$I^K$-$\lim x_n=x ~~ \Rightarrow ~~ w$-$I$-$\lim x_n=x$ if $K\subseteq I$.
\end{theorem}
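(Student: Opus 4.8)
The plan is to treat the two implications separately, each reducing to results already in hand.

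For part (i), I would argue by a short chain of implications using monotonicity. Since $I\subseteq K$, Note \ref{1} upgrades weak $I$-convergence of $\{x_n\}$ to $x$ into weak $K$-convergence of $\{x_n\}$ to $x$. Then Lemma \ref{0} applies verbatim: weak $K$-convergence implies weak $I^K$-convergence (witnessed by $M=\mathbb{N}\in F(I)$ and $y_n=x_n$). So (i) requires no new computation beyond citing these two facts.

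For part (ii), suppose $w$-$I^K$-$\lim x_n=x$ with $K\subseteq I$. By definition there is a set $M\in F(I)$ so that the modified sequence $\{y_n\}$ (equal to $x_n$ for $n\in M$ and to $x$ for $n\notin M$) is weak $K$-convergent to $x$. I would fix an arbitrary $f\in X^*$ and $\epsilon>0$ and set $B:=\{n\in\mathbb{N}:|f(y_n)-f(x)|\geq\epsilon\}$, so that $B\in K\subseteq I$. The main step is to compare $B$ with $A:=\{n\in\mathbb{N}:|f(x_n)-f(x)|\geq\epsilon\}$ via the case split on membership in $M$: if $n\notin M$ then $y_n=x$, hence $|f(y_n)-f(x)|=0<\epsilon$ and $n\notin B$; if $n\in M$ then $y_n=x_n$, so $A\cap M=B\cap M$. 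Together these give the inclusion $A\subseteq B\cup(\mathbb{N}\setminus M)$.

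To finish, I would invoke the ideal axioms: $\mathbb{N}\setminus M\in I$ since $M\in F(I)$, and $B\in I$ as observed, so $B\cup(\mathbb{N}\setminus M)\in I$ by closure under finite unions, whence $A\in I$ by closure under subsets. As $f$ and $\epsilon$ were arbitrary, this is precisely weak $I$-convergence of $\{x_n\}$ to $x$. I do not expect any genuine obstacle here; the only delicate point is getting the inclusion $A\subseteq B\cup(\mathbb{N}\setminus M)$ right from the case split, and everything else is routine bookkeeping with the ideal–filter duality.
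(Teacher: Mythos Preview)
Your proposal is correct and follows essentially the same approach as the paper: for (i) you pass from weak $I$- to weak $K$-convergence via the inclusion $I\subseteq K$ and then invoke Lemma~\ref{0}, and for (ii) you establish the inclusion $A\subseteq B\cup(\mathbb{N}\setminus M)$ and conclude by the ideal axioms, exactly as the paper does. The only cosmetic difference is that you cite Note~\ref{1} for the monotonicity step in (i) whereas the paper writes it out directly.
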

\begin{proof}
(i) Since $\{x_n\}$ is weak $I$-convergent to $x\in X$ then for any $\epsilon>0$ and $f\in X^*$ the set $A(f,\epsilon)=\{n\in\mathbb{N}:|f(x_n)-f(x)|\geq \epsilon\}\in I$. Again $I\subseteq K$ so $A(f,\epsilon)\in K$. Therefore the sequence $\{x_n\}$ is weak $K$-convergent to $x$. So from the lemma \ref{0} we get $\{x_n\}$ is weak $I^K$-convergent to $x$.

(ii) Now $w$-$I^K$-$\lim x_n=x$ then there exists a set $M\in F(I)$ such that the sequence $\{y_n\}$ given by  
\[y_n=\left\{\begin {array}{ll}
        x_n & \mbox{if $n\in M$} \\ 
		x & \mbox{if $n\notin M$}
		\end{array}
		\right. \]
is weak $K$-convergent to $x$. So $A(f,\epsilon)=\{n\in\mathbb{N}:|f(y_n)-f(x)|\geq \epsilon\}=\{n\in\mathbb{N}:|f(x_n)-f(x)|\geq \epsilon\}\cap M\in K\subseteq I$. Consequently $\{n\in\mathbb{N}:|f(x_n)-f(x)|\geq \epsilon\}\subseteq (\mathbb{N}\setminus M)\cup A(f,\epsilon)\in I$. So $w$-$I$-$\lim x_n=x$.
\end{proof}
\begin{note}
If  $K\not\subset I$ and $I\not\subset K$ then none of these implications in theorem\ref{ab} may not be true. To support this we cite an example which is weak-$I^K$-convergence but not weak-$I$-convergence.
\end{note} 
\begin{example}
Let $I$ and $K$ be two ideals on $\mathbb{N}$ such that $K\not\subset I$ and $I\not\subset K$, but $K\cap I\neq \phi$. Let $x,y\in X$,$x\neq y$ and consider a set $M\in K\setminus I$. Let us now consider a sequence $\{x_n\}$ with define by
\[x_n=\left\{\begin {array}{ll}
        x & \mbox{if $n\notin M$} \\
		y & \mbox{if $n\in M$}
		\end{array}
		\right. \]
then for every $\epsilon>0$ and $f\in X^*$ we get $\{n\in\mathbb{N}:|f(x_n)-f(x)|\geq \epsilon\}\subset M\in K$. So $w-K-\lim x_n=x$. But, since $x-y\neq \theta$ so from Hanh Banach theorem there exist a $f\in X^*$ such that $f(x-y)=||x-y||$. Choose an $\epsilon=\frac{||x-y||}{2}$. then $\{n\in\mathbb{N}:|f(x_n)-f(x)|\geq \epsilon\}$=$\{n\in M:|f(x_n)-f(x)|\geq \epsilon\}\cup \{n\in\mathbb{N}\setminus M:|f(x_n)-f(x)|\geq \epsilon\}=\{n\in M:|f(y)-f(x)|\geq \frac{||x-y||}{2}\}= \{n\in M:||x-y||\geq \frac{||x-y||}{2}\}M\notin I$. So $w-I-\lim x_n\neq x$.
\end{example}
\begin{note}
Consider any two ideals  $I$ and $K$ on $\mathbb{N}$ then we can construct a new ideal $I\vee K=\{A\cup B:A\in I, B\in K\}$ containing both $I,K$.The dual filter of $I\vee K$ is $F(I\vee K)=\{G\cap H:G\in F(I),H\in F(K)\}$, when $I\vee K$ is non-trivial. It should be noted that if $I\vee K$ is non-trivial ideal and $I,K$ are proper subsets of $I\vee K$ then both $I$ and $K$ are non-trivial. But converse part may or may not be true always. To establish this, following examples are given.
\end{note}
\begin{example}
Let the two sets $P=\{5p:p\in \mathbb{N}\}$ and $S=\{5s-1:s\in \mathbb{N}\}$ now it is clear that $2^{P}$, $2^{S}$ and $2^P\vee 2^S$ all ideals are non-trivial on $\mathbb{N}$.
\end{example}

\begin{example}
Now let  $P$ be set of all odd integers and $S$ be set of all even integers. Then $I=2^{P}$, $K=2^{S}$ both are non-trivial on the whole set $\mathbb{N}$ but $I\vee K$ is not a non-trivial ideal on $\mathbb{N}$.
\end{example}
\begin{theorem}\label{a}
If $I\vee K$ is non-trivial ideal on $\mathbb{N}$ and $X$ is normed space then weak $I^K$-limit of a sequence $\{x_n\}_{n\in \mathbb{N}}$ in $X$ is unique.
\end{theorem}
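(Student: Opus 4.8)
The plan is to argue by contradiction. Suppose $\{x_n\}_{n\in\mathbb N}$ is weak $I^K$-convergent both to $x$ and to $y$ with $x\neq y$. Since $x-y\neq\theta$, the Hahn--Banach theorem supplies an $f\in X^*$ with $f(x-y)=\|x-y\|>0$; I would then fix $\epsilon=\tfrac13\|x-y\|>0$. The aim is to exhibit a single index $n$ with $|f(x_n)-f(x)|<\epsilon$ and $|f(x_n)-f(y)|<\epsilon$ simultaneously, since this contradicts $|f(x)-f(y)|=\|x-y\|=3\epsilon$ via the triangle inequality.

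To locate such an index I would unwind the definition of weak $I^K$-convergence twice. There are sets $M_1,M_2\in F(I)$ together with the associated auxiliary sequences $\{y_n\}$ (equal to $x_n$ on $M_1$, to $x$ off $M_1$) and $\{z_n\}$ (equal to $x_n$ on $M_2$, to $y$ off $M_2$) which are weak $K$-convergent to $x$ and $y$ respectively. Hence $A_1:=\{n\in\mathbb N:|f(y_n)-f(x)|\geq\epsilon\}\in K$ and $A_2:=\{n\in\mathbb N:|f(z_n)-f(y)|\geq\epsilon\}\in K$. Because $y_n=x_n$ for $n\in M_1$, one checks the inclusion $\{n\in\mathbb N:|f(x_n)-f(x)|\geq\epsilon\}\subseteq A_1\cup(\mathbb N\setminus M_1)$, and the right-hand side lies in $I\vee K$ since $A_1\in K$ and $\mathbb N\setminus M_1\in I$. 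Likewise $\{n\in\mathbb N:|f(x_n)-f(y)|\geq\epsilon\}\in I\vee K$.

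Now the hypothesis that $I\vee K$ is non-trivial enters decisively: the union $P:=\{n:|f(x_n)-f(x)|\geq\epsilon\}\cup\{n:|f(x_n)-f(y)|\geq\epsilon\}$ belongs to $I\vee K$, and a non-trivial (proper) ideal cannot contain $\mathbb N$, so $\mathbb N\setminus P\neq\phi$. Any $n\in\mathbb N\setminus P$ gives $|f(x_n)-f(x)|<\epsilon$ and $|f(x_n)-f(y)|<\epsilon$, whence $\|x-y\|=|f(x-y)|\leq|f(x)-f(x_n)|+|f(x_n)-f(y)|<2\epsilon=\tfrac23\|x-y\|$, a contradiction. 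Therefore $x=y$.

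I expect the only real subtlety to be the bookkeeping with $I\vee K=\{A\cup B:A\in I,\,B\in K\}$: transferring the information about the auxiliary sequences back to $\{x_n\}$ on the complements of $M_1,M_2$, and then invoking properness of $I\vee K$ (equivalently $\mathbb N\notin I\vee K$) to secure a common ``good'' index. The rest is the standard Hahn--Banach plus triangle-inequality argument for uniqueness of weak limits. Alternatively, the same estimate shows that $\{x_n\}$ is weak $(I\vee K)$-convergent to both $x$ and $y$, and then uniqueness of the weak ideal limit for the non-trivial ideal $I\vee K$ closes the argument at once.
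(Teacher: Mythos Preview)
Your proposal is correct and follows essentially the same route as the paper: both arguments use Hahn--Banach to obtain an $f\in X^*$ separating $x$ and $y$, pick $\epsilon$ as a third of the gap, unwind the two instances of weak $I^K$-convergence to produce sets in $I$ and $K$, and then invoke non-triviality of $I\vee K$ to force a contradiction via the triangle inequality. The only cosmetic difference is that the paper works on the filter side (deriving $A_1\cap A_2\in F(I)$ and $\mathbb{N}\setminus(A_1\cap A_2)\in F(K)$, hence $\phi\in F(I\vee K)$), whereas you work on the ideal side (the union of the two ``bad'' sets lies in $I\vee K$, so its complement is nonempty); these are equivalent formulations of the same step.
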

\begin{proof}
If possible let sequence $\{x_n\}_{n\in \mathbb{N}}$ has two distinct weak $I^K$-limits say $x$ and $y$. Since $x\neq y$ i.e. $(x-y)\neq \theta$ then by a consequence of Hahn Banach theorem there exists $f$ such that $f(x-y)=||x-y||\neq\theta$ then $f(x)\neq f(y)$ and let $\epsilon=\frac{|f(x)-f(y)|}{3}>0$. Since $\{x_n\}_{n\in \mathbb{N}}$ has weak $I^K$-limit $x$ then there exists a set $A_1\in F(I)$ such that the $\{y_n\}\in X$ given by  
\[y_n=\left\{\begin {array}{ll}
        x_n & \mbox{if $n\in A_1$} \\
		x & \mbox{if $n\notin A_1$}
		\end{array}
		\right. \]
is weak $K$-convergent to $x$. So,$\{n\in \mathbb{N}:|f(y_n)-f(x)|\geq \epsilon\}\in K$ i.e. $\{n\in \mathbb{N}:|f(y_n)-f(x)|< \epsilon\}\in F(K)$ which implies that $\{n\in A_1:|f(y_n)-f(x)|< \epsilon\}\cup\{n\in \mathbb{N}\setminus A_1:|f(y_n)-f(x)|< \epsilon\}\in F(K)$ i.e. $(\mathbb{N}\setminus A_1)\cup\{n\in A_1:|f(y_n)-f(x)|< \epsilon\}\in F(K)$ i.e. $\mathbb{N}\setminus(A_1\setminus\{n\in A_1:|f(y_n)-f(x)|< \epsilon\})\in F(K)$ so $A_1\setminus B_1\in K$ where $B_1=\{n\in A_1:|f(x_n)-f(x)|< \epsilon\}$. Similarly as $\{x_n\}$ has weak $I^K$-limit $y$, so there exists a set $A_2\in F(I)$ such that $A_2\setminus B_2\in K$ where $B_2=\{n\in A_2:|f(x_n)-f(y)|< \epsilon\}$.
So, $(A_1\setminus B_1)\cup (A_2\setminus B_2)\in K$ then $(A_1\cap A_2)\cap(B_1\cap B_2)^c\subset (A_1\cap B^c_1)\cup(A_2\cap B^c_2)\in K$. Thus $(A_1\cap A_2)\cap(B_1\cap B_2)^c\in K$ i.e. $(A_1\cap A_2)\setminus(B_1\cap B_2)\in K$. Now by our construction we get $B_1\cap B_2=\phi$. For if $B_1\cap B_2\neq \phi$, let $n\in B_1\cap B_2$ then $|f(x_n)-f(x)|<\epsilon$ and $|f(x_n)-f(y)|<\epsilon$. Therefore, $3\epsilon=|f(x)-f(y)|\leq|f(x)-f(x_n)|+|f(x_n)-f(y)|<2\epsilon$, which is a contradiction. So $A_1\cap A_2\in K$ i.e. $\mathbb{N}\setminus (A_1\cap A_2)\in F(K)\longrightarrow(i)$. Since $A_1,A_2\in F(I)$ so $A_1\cap A_2\in F(I)\longrightarrow (ii)$.
Since $I\vee K$ is non-trivial so the dual filter $F(I\vee K)$ exits.
Now from (i) and (ii) we get $\phi\in F(I\vee K)$, which is a contradiction. Hence the weak $I^K$-limit is unique.
\end{proof}

\begin{theorem}
Let $X$ be normed space and $I,K$ be two ideals on $\mathbb{N}$. A sequence $\{x_n\}_{n\in \mathbb{N}}\in X$ is weak $I^K$-convergent to $x$ if and only if it is weak $(I\vee K)^K$-convergent to $x$.
\end{theorem}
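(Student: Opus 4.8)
The plan is to establish the two implications separately, noting that one direction is essentially free from the monotonicity results already proved, while the converse carries the real content. For the forward implication, observe that $I\subseteq I\vee K$ always holds (write $A=A\cup\emptyset$ with $\emptyset\in K$ for $A\in I$); hence the earlier proposition comparing $w$-$I^K$-convergence with $w$-$I_1^K$-convergence for $I\subseteq I_1$, applied with $I_1=I\vee K$, immediately gives that $w$-$I^K$-$\lim x_n=x$ implies $w$-$(I\vee K)^K$-$\lim x_n=x$. Concretely, a witnessing set $M\in F(I)$ for the $I^K$-convergence also lies in $F(I\vee K)$, and $\{x_n\}_{n\in M}$ is already weak-$K|_M$-convergent to $x$.

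For the converse, assume $w$-$(I\vee K)^K$-$\lim x_n=x$ and fix a witnessing set $M\in F(I\vee K)$ such that $\{x_n\}_{n\in M}$ is weak-$K|_M$-convergent to $x$. Using the description $F(I\vee K)=\{G\cap H:G\in F(I),\,H\in F(K)\}$ recorded above (available since $I\vee K$ is non-trivial), write $M=G\cap H$ with $G\in F(I)$ and $H\in F(K)$. I would then show that $G$ itself witnesses weak $I^K$-convergence, i.e. that $\{x_n\}_{n\in G}$ is weak-$K|_G$-convergent to $x$. Given $f\in X^*$ and $\epsilon>0$, split
\[
\{n\in G:|f(x_n)-f(x)|\geq\epsilon\}=\{n\in M:|f(x_n)-f(x)|\geq\epsilon\}\cup\{n\in G\setminus M:|f(x_n)-f(x)|\geq\epsilon\}.
\]
The first set lies in $K$, since it belongs to $K|_M$ and every element of $K|_M$, being a subset of a member of $K$, is itself a member of $K$. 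The second set is contained in $G\setminus M=G\setminus H\subseteq\mathbb{N}\setminus H$, and $\mathbb{N}\setminus H\in K$ because $H\in F(K)$; hence it too belongs to $K$. Therefore the union belongs to $K$, and being a subset of $G$ it belongs to $K|_G$. As $f$ and $\epsilon$ were arbitrary, $\{x_n\}_{n\in G}$ is weak-$K|_G$-convergent to $x$ with $G\in F(I)$, which is exactly $w$-$I^K$-$\lim x_n=x$.

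I expect the only genuinely delicate points to be the bookkeeping with restricted ideals --- namely the repeatedly used fact that for $S\subseteq M$ one has $S\in K|_M$ if and only if $S\in K$ --- together with the (standing) assumption that $I\vee K$ is non-trivial, which is what makes the dual-filter description $F(I\vee K)=\{G\cap H:G\in F(I),\,H\in F(K)\}$ legitimate; without it the statement can fail, since the trivial decomposition $M=\emptyset$ would force every sequence to be weak $(I\vee K)^K$-convergent to everything. The remaining steps are the routine union-and-complement calculus for ideals.
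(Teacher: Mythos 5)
Your proof is correct and follows essentially the same route as the paper: the forward implication via $F(I)\subseteq F(I\vee K)$, and the converse by decomposing the witness $M=G\cap H$ with $G\in F(I)$, $H\in F(K)$ and showing that $G$ itself witnesses weak $I^K$-convergence. The only difference is cosmetic --- you argue with the ``bad'' sets $\{n:|f(x_n)-f(x)|\geq\epsilon\}$ and the ideal $K$, while the paper works with their complements and the dual filter $F(K)$ --- and your explicit bookkeeping of the facts $S\subseteq M$, $S\in K|_M\Rightarrow S\in K$ and $S\subseteq G$, $S\in K\Rightarrow S\in K|_G$ is if anything a little more careful than the original.
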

\begin{proof}
Suppose that $\{x_n\}$ is weak $I^K$-convergent to $x$ then there exists an $M\in F(I)$ such that the sequence $\{x_n\}_{n\in M}$ is weak-$K|_M$-convergent to $x$. Since $M\in F(I)$ so it is clear that $M\in F(I\vee K)$. Therefore $\{x_n\}$ is also weak $(I\vee K)^K$-convergent to $x$.\\
Conversely, let $\{x_n\}$ is weak $(I\vee K)^K$-convergent to $x$ then there exists an $M\in F(I\vee K)$ such that the sequence $\{x_n\}_{n\in M}$ is weak-$K|_M$-convergent to $x$. So for any $\epsilon(>0)$ and for every $f\in X^*$ there exists $G\in F(K)$ such that $A(f,\epsilon)\cap M=G\cap M$ where $A(f,\epsilon)={n\in \mathbb{N}:|f(x_n)-f(x)|<\epsilon}$. Since $M\in F(I\vee K)$ then $M=M_1\cap M_2$ for some $M_1\in F(I)$ and $M_2\in F(K)$. Now we have $A(f,\epsilon)\cap M_1\supseteq A(f,\epsilon)\cap M= (G\cap M_2)\cap M_1$. Since $G\cap M_2\in F(K)$, this shows that $A(f,\epsilon)\cap M_1\in F(K|_{M_1})$ i.e. $\{x_n\}$ is weak $I^K$-convergent to $x$.
\end{proof}

In the rest of this section, using additive property of ideals we will investigate  the relationship between weak-$I$ and $I^K$-convergence. Now we recall the definition of $K$-pseudo intersection and then AP$(I,K)$-condition.
\begin{definition} \cite{23}
Let $K$ be an ideal on $\mathbb{N}$. We denote $A\subset_K B$ whenever $A\setminus B\in K.$ If $A\subset_K B$ and $B\subset_K A$ then we denote $A\sim_K B$. Clearly $A\sim_K B \Leftrightarrow A\bigtriangleup B\in K$.\\
If $A\subset_K A_n$ holds for each $n\in \mathbb{N}$ then we say that a set $A$ is $K$-pseudo intersection of a system $\{A_n: n\in \mathbb{N}\}$.
\end{definition}
\begin{definition} \cite{23}\label{2}
Let $I,K$ be ideals on the set $X$. We say that $I$ has additive property with respect to $K$ or that the condition AP$(I,K)$ holds if any one of the equivalent condition of following holds:
(a) For every sequence $(A_n)_{n\in \mathbb{N}}$ of sets from  $I$ there is $A\in I$ such that $A_n\subset_K A$ for every $n'$s.\\
(b) Any sequence $(F_n)_{n\in \mathbb{N}}$ of sets from $F(I)$ has $K$-pseudo intersection in $F(I)$.\\
(c) For every sequence $(A_n)_{n\in \mathbb{N}}$ of sets from the ideal $I$ there exists a sequence $(B_n)_{n\in \mathbb{N}}\subset I$ such that $A_j\sim_K B_j$ for $j\in \mathbb{N}$ and $B=\displaystyle{\cup_{j\in \mathbb{N}}}B_j\in I$.\\
(d) For every sequence of mutually disjoint sets $(A_n)_{n\in \mathbb{N}}\subset I$ there exists a sequence $(B_n)_{n\in \mathbb{N}}\subset I$ such that $A_j\sim_K B_j$ for $j\in \mathbb{N}$ and $B=\displaystyle{\cup_{j\in \mathbb{N}}}B_j\in I$.\\
(e) For every non-decreasing sequence $A_1\subseteq A_2\subseteq \cdots \subseteq A_n\cdots $ of sets from $I$ $\exists$  a sequence $(B_n)_{n\in \mathbb{N}}\subset I$ such that $A_j\sim_K B_j$ for $j\in \mathbb{N}$ and $B=\displaystyle{\cup_{j\in \mathbb{N}}}B_j\in I$.\\
(f) In the Boolean algebra $2^S/K$ the ideal $I$ corresponds to a $\sigma$-directed subset,i.e. every countable subset has an upper bound.
\end{definition}
Note that the proof that the condition (a)to (f) in the definition \ref{2} are equivalent has been given in \cite{23}[lemma 3.9]. Above definition is reformulation of definition given below:
\begin{definition}\cite{16}
Let $I,K$ be ideals on the non-empty set $S$. We say that $I$ has additive property with respect to $K$ or that the condition AP$(I,K)$ holds if for every sequence of pairwise disjoint sets $A_n\in I$, there exists a sequence $B_n\in I$ such that $A_n\bigtriangleup B_n\in K$ for each $n$ and $\displaystyle \cup_{n\in \mathbb N}B_n\in I$ 
\end{definition}
\begin{theorem}
If the condition AP$(I,K)$ holds then weak-$I$-convergence implies weak-$I^K$-convergence, where $I,K$ are two ideals on $\mathbb{N}$.
\end{theorem}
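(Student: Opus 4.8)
The plan is to transplant the classical argument that the condition AP$(I,K)$ turns $I$-convergence into $I^K$-convergence (the scalar statement of \cite{24}) to the weak topology; the genuinely new feature is that the single set $M\in F(I)$ produced at the end must serve \emph{every} functional $f\in X^*$ at once.

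First I would fix $\{x_n\}$ with $w$-$I$-$\lim x_n=x$ and rephrase the target. By the Remark following the definition of weak $I^K$-convergence, it suffices to exhibit one $M\in F(I)$ such that $A(f,\epsilon)\cap M\in K$ for all $f\in X^*$ and all $\epsilon>0$, where $A(f,\epsilon)=\{n\in\mathbb N:|f(x_n)-f(x)|\ge\epsilon\}$; equivalently, with $A=\mathbb N\setminus M$, one needs a single $A\in I$ with $A(f,1/k)\subset_K A$ for all $f$ and all $k\in\mathbb N$. Weak $I$-convergence gives $A(f,1/k)\in I$ for every $f,k$, and $A(f,1)\subseteq A(f,2)\subseteq\cdots$. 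The next step is to cut the a priori uncountable family $\{A(f,1/k):f\in X^*,\ k\in\mathbb N\}$ down to a countable one: the set $\{x_n-x:n\in\mathbb N\}$ is countable, hence lies in a separable subspace, and---after noting that a weak $I$-convergent sequence stays, modulo $I$, inside a fixed ball---a Hahn-Banach / $\epsilon$-approximation argument reduces matters to controlling $A(f_j,1/k)$ for a countable norming family $\{f_j\}$: for a general $g$ one estimates $|g(x_n)-g(x)|\le|f_j(x_n)-f_j(x)|+\|g-f_j\|\,\|x_n-x\|$, so $A(g,\epsilon)\subset_K A(f_j,\epsilon/2)$ once $f_j$ is close enough to $g$. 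Now AP$(I,K)$, applied (say in form (a) of Definition \ref{2}) to the countable family $\{A(f_j,1/k):j,k\in\mathbb N\}\subseteq I$, delivers $A\in I$ with $A(f_j,1/k)\subset_K A$ for all $j,k$; put $M=\mathbb N\setminus A\in F(I)$.

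Finally I would verify that the associated sequence $y_n=x_n$ for $n\in M$, $y_n=x$ for $n\notin M$, is weak $K$-convergent to $x$: given $g\in X^*$ and $\epsilon>0$, take $f_j$ close to $g$ and $k$ with $1/k\le\epsilon/2$; then $\{n:|g(y_n)-g(x)|\ge\epsilon\}=A(g,\epsilon)\cap M\subseteq A(f_j,1/k)\cap M=A(f_j,1/k)\setminus A\in K$, so $g(y_n)$ is $K$-convergent to $g(x)$, and therefore $w$-$I^K$-$\lim x_n=x$. I expect the main obstacle to be exactly the reduction to a countable set of functionals (together with the accompanying boundedness-modulo-$I$ point): AP$(I,K)$ only amalgamates countably many sets of $I$, whereas weak convergence is probed by all of $X^*$, so this is the one place where the weak version really diverges from the scalar one, and where, if it turns out to be needed, an extra hypothesis on $X$ (separability of $X$ or of $X^*$) would be brought in.
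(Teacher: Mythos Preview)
Your worry about producing a \emph{single} $M\in F(I)$ that serves every $f\in X^*$ is precisely what distinguishes your outline from the paper's argument. The paper does not attempt this reduction at all: it fixes $f\in X^*$ at the outset, chooses a countable base of $\epsilon_i$'s at $f(x)$, applies AP$(I,K)$ to the countable family $B_i=\{n:|f(x_n)-f(x)|<\epsilon_i\}\in F(I)$ to obtain $A\in F(I)$ with $A\setminus B_i\in K$ for every $i$, sets $y_n=x_n$ on $A$ and $y_n=x$ off $A$, and computes $\{n:|f(y_n)-f(x)|\ge\epsilon_i\}=A\setminus B_i\in K$. It then asserts this holds ``for each $i$ and every $f\in X^*$''. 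In other words, the paper runs the scalar $I^K$ argument one functional at a time; the set $A$ it produces visibly depends on $f$, and the uniformity-in-$f$ required by the definition of weak $I^K$-convergence is not addressed. Your proposal is therefore not a variant of the paper's proof but an attempt to repair exactly the step the paper glosses over.

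That said, your repair has its own gaps, and you flag them yourself. First, the claimed boundedness ``modulo $I$'' of a weak $I$-convergent sequence is not a consequence of the definitions: no uniform-boundedness-principle style argument is available in the $I$-convergence setting, and without a bound on $\|x_n-x\|$ the estimate $|g(x_n-x)|\le|f_j(x_n-x)|+\|g-f_j\|\,\|x_n-x\|$ does not help. Second, the approximation $\|g-f_j\|$ small requires norm-separability of (the relevant part of) $X^*$; separability of $Y=\overline{\mathrm{span}}\{x_n-x\}$ alone gives only weak$^*$ separability of the dual ball, which does not yield the inclusion $A(g,\epsilon)\subset_K A(f_j,\epsilon/2)$ you need. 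So without the extra hypotheses you anticipate (separability of $X^*$ and some boundedness of $\{x_n\}$), your argument does not close either. In summary: you have correctly identified a genuine issue in the passage from weak $I$- to weak $I^K$-convergence that the paper's proof does not resolve, but the proposed fix imports assumptions the theorem as stated does not make.
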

\begin{proof}
Let $\{x_n\}$ be weak $I$-convergent sequence to $x\in X$. Let $f\in X^*$ and choose a sequence of rationals $\{\epsilon_i:i\in \mathbb{N}\}$ so that $\{(f(x)-\epsilon_i,f(x)+\epsilon_i):i\in \mathbb{N}\}$ be a countable base for $\mathbb{R}$ at the point $f(x)$. By weak $I$-convergence of $\{x_n\}$ we have $B_i=\{n:|f(x_n)-f(x)|<\epsilon_i\}\in F(I)$ for each $i$, thus by definition \ref{2}(b) there exists a set $A\in F(I)$ with $A\subset_K B_i$ i.e. $A\setminus B_i\in K$ for all $i$'s. Now it suffices to show that the sequence $\{y_n\}\in X$ given by
\[y_n=\left\{\begin {array}{ll}
        x_n & \mbox{if $n\in A$} \\
		x & \mbox{if $n\notin A$}
		\end{array}
		\right. \]
is weak $K$-convergent to $x$. Now $\{n\in \mathbb{N}:|f(y_n)-f(x)|<\epsilon_i\}=\{n\in A:|f(y_n)-f(x)|<\epsilon_i\}\cup\{n\in \mathbb{N}\setminus A:|f(y_n)-f(x)|<\epsilon_i\}=(\mathbb{N}\setminus A)\cup \{n\in A:|f(x_n)-f(x)|<\epsilon_i\}=(\mathbb{N}\setminus A)\cup(B_i \cap A)=\mathbb{N}\setminus(A\setminus B_i)$. As $A\setminus B_i\in K$ then $\mathbb{N}\setminus(A\setminus B_i)\in F(K)$. Thus $\{n\in \mathbb{N}:|f(y_n)-f(x)|<\epsilon_i\}\in F(K)$ for each $i$ and every $f\in X^*$. Thus $\{y_n\}$ is weak $K$-convergent to $x$. Hence $\{x_n\}$ is weak $I^K$-convergent to $x$.
\end{proof}
\begin{theorem}
Let $I,K$ be ideals on $\mathbb{N}$. If for any sequence $\{x_n\}_{n\in \mathbb{N}}$ in $X$ weak $I$-convergence implies weak $I^K$-convergence then the condition AP$(I,K)$ holds.
\end{theorem}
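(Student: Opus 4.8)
The plan is to verify the equivalent formulation (d) of condition AP$(I,K)$ in Definition \ref{2}: starting from an arbitrary sequence $(A_k)_{k\in\mathbb{N}}$ of pairwise disjoint sets from $I$, I must exhibit sets $B_k\in I$ with $A_k\sim_K B_k$ for every $k$ and $\bigcup_{k\in\mathbb{N}}B_k\in I$. The hypothesis is vacuous when $X=\{\theta\}$, so I work in a nontrivial normed space and fix once and for all a vector $x_0\in X$ with $\|x_0\|=1$ together with, via the Hahn--Banach theorem, a functional $g\in X^*$ satisfying $g(x_0)=1$.

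The first step is to encode the family $(A_k)$ into a single sequence that is weak $I$-convergent. Define $x_n=\frac{1}{k}x_0$ whenever $n\in A_k$, and $x_n=\theta$ whenever $n\notin\bigcup_{k}A_k$. I would then check that $w$-$I$-$\lim x_n=\theta$: for $f\in X^*$ and $\epsilon>0$, the set $\{n\in\mathbb{N}:|f(x_n)|\geq\epsilon\}$ is empty if $f(x_0)=0$, and otherwise is contained in the finite union $A_1\cup\cdots\cup A_{m}$ where $m$ is the largest index with $\frac{1}{m}|f(x_0)|\geq\epsilon$; in either case it belongs to $I$. Invoking the hypothesis, $\{x_n\}$ is weak $I^K$-convergent to $\theta$, so there is a set $M\in F(I)$ for which the sequence $\{y_n\}$ with $y_n=x_n$ for $n\in M$ and $y_n=\theta$ for $n\notin M$ is weak $K$-convergent to $\theta$.

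Now put $B_k=A_k\setminus M=A_k\cap(\mathbb{N}\setminus M)$. Because $\mathbb{N}\setminus M\in I$, each $B_k\in I$ and moreover $\bigcup_{k}B_k\subseteq\mathbb{N}\setminus M\in I$, so $\bigcup_k B_k\in I$. It remains to see that $A_k\bigtriangleup B_k=A_k\cap M\in K$, and this is where $g$ enters: since $\{y_n\}$ is weak $K$-convergent to $\theta$, the set $\{n\in\mathbb{N}:|g(y_n)|\geq\frac{1}{k}\}$ lies in $K$; but every $n\in A_k\cap M$ satisfies $y_n=\frac{1}{k}x_0$ and hence $|g(y_n)|=\frac{1}{k}$, so $A_k\cap M$ is a subset of that $K$-set and therefore $A_k\cap M\in K$. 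Thus $A_k\sim_K B_k$ for each $k$, condition (d) of Definition \ref{2} is satisfied, and AP$(I,K)$ holds.

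The main thing to be careful about is the passage from the disjoint family to a genuinely weak $I$-convergent sequence: one must observe that the estimate $|f(x_n)|\leq\frac{1}{k}\|f\|$ on $A_k$ is uniform enough that only finitely many $A_k$'s survive the threshold $\epsilon$, so the bad set really is a finite union of members of $I$. After that, the device of working along one fixed direction $x_0$ collapses all the weak conditions to statements about the scalar null sequence $\frac1k$, and a single norming functional $g$ does the rest; this is an adaptation of the classical technique relating $I^*$-convergence to the AP condition.
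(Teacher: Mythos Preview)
Your proof is correct and follows essentially the same strategy as the paper's: encode the disjoint family $(A_k)$ into a single sequence, verify weak $I$-convergence, invoke the hypothesis to obtain $M\in F(I)$, and set $B_k=A_k\setminus M$. The only difference is cosmetic---the paper builds its sequence from an abstract weakly convergent sequence $\{z_j\}\subset X\setminus\{x\}$ together with a background weak $I$-convergent sequence, whereas you work along a single fixed direction $x_0$ with a norming functional $g$; your construction is cleaner and makes the verification that $A_k\cap M\in K$ entirely explicit.
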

\begin{proof}
Let $\{y_n\}$ be a sequence in $X$ which is weak $I$-convergent to $x$, Since $X$ is first countable and $f(x)$ is not isolated point in $\mathbb{R}$ then there exists a sequence $\{z_n\}$ of points from $X\setminus \{x\}$ which weak convergent to $x$. Let $\{A_n:n\in \mathbb{N}\}$ be a system of mutually disjoint sets from $I$. Let us define a sequence $\{x_n\}$ as
\[x_n=\left\{\begin {array}{ll}
        z_j & \mbox{if $n\in A_j$} \\
		y_n & \mbox{if $n\notin \cup A_j$}
		\end{array}
		\right. \]
Let $f\in X^*$ be arbitrary. Now $\{n\in \mathbb{N}:|f(x_n)-f(x)|\geq\epsilon\}\subset \{n\in \mathbb{N}:|f(y_n)-f(x)|\geq\epsilon\}\cup \displaystyle{\cup_{j=1}^n}A_j$ implies $\{n\in \mathbb{N}:|f(x_n)-f(x)|\geq\epsilon\}\in I$. This shows that $\{x_n\}$ is weak $I$-convergent to $x$. By our assumption this implies $\{x_n\}$ is weak $I^K$-convergent to $x$ i.e. there exists a set $M\in F(I)$ such that $\{x_n\}_{n\in M}$ is weak $K|_M$-convergent to $x$ i.e. $\{n\in \mathbb{N}:|f(x_n)-f(x)|\geq \epsilon\}\cap M=A\cap M$ for some $A\in K$. This implies that $\{n\in \mathbb{N}:|f(x_n)-f(x)|\geq \epsilon\}\cap M\in K$. Let us define $B_i=A_i\setminus M$ we have $\displaystyle{\cup_{i\in \mathbb{N}}}B_i\subseteq \mathbb{N}\setminus M\in I$. At the same time, for the set $B_i\bigtriangleup A_i=A_i\cap M$ we have $A_i\cap M\subseteq \{n\in \mathbb{N}:|f(x_n)-f(x)|\geq \epsilon\}\cap M$ for any $\epsilon>0$. Consequently $B_i\bigtriangleup A_i\in K$. Hence the condition AP$(I,K)$ holds.
\end{proof}
\section{weak* $I^K$-convergence}
In this section, Following Bala\cite{1} and Bhardwaj et al\cite{2.1}, now we introduce the concept of weak* $I^K$-convergence of sequence of functionals and present some result.
\begin{definition}\cite{2.1}
A sequence $\{f_n\}_{n\in \mathbb{N}}$ in $X^*$ is said to be weak* $I$-convergent to $f\in X^*$ if for any $\epsilon>0$ and for each $x\in X$ the set $A(x,\epsilon)=\{n\in\mathbb{N}:|f_n(x)-f(x)|\geq \epsilon\}\in I$. In this case we write $w^*$-$I$-$\displaystyle{\lim_{n\rightarrow\infty}}{f_n}=f$.
\end{definition}

\begin{definition}\label{3}
A sequence $\{f_n\}_{n\in \mathbb{N}}$ in $X^*$ is said to be weak* $I^*$-convergent to $f\in X^*$ if there exists a set $M=\{m_1<m_2<...<m_k<...\}\in F(I)$ such that $\displaystyle{\lim_{k\rightarrow \infty}}f_{m_k}(x)=f(x)$ for each $x\in X$. In this case we write $w^*$-$I^*$-$\displaystyle{\lim_{n\rightarrow\infty}}{f_n}=f$.
\end{definition}
\begin{theorem}
Let $X$ be a normed space and $\{f_n\}_{n\in \mathbb{N}}$ be a sequence in $X^*$. If $\{f_n\}$ is weak* $I^*$-convergent to $f\in X^*$ then it is weak* $I$-convergent to $f$.
\end{theorem}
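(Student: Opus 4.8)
The plan is to mimic the classical argument that $I^*$-convergence implies $I$-convergence, now carried out coordinatewise (that is, for each fixed $x\in X$) against the scalar sequence $\{f_n(x)\}$. First I would unpack the hypothesis: by Definition \ref{3}, weak* $I^*$-convergence of $\{f_n\}$ to $f$ yields a set $M=\{m_1<m_2<\cdots<m_k<\cdots\}\in F(I)$ with $\lim_{k\to\infty}f_{m_k}(x)=f(x)$ for every $x\in X$. Put $H=\mathbb{N}\setminus M$; then $H\in I$, since $M$ lies in the associated filter $F(I)$.

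Next, fix an arbitrary $x\in X$ and $\epsilon>0$, and consider the set $A(x,\epsilon)=\{n\in\mathbb{N}:|f_n(x)-f(x)|\geq\epsilon\}$ that we must place in $I$. Since $f_{m_k}(x)\to f(x)$ as $k\to\infty$, there is a threshold $k_0\in\mathbb{N}$ such that $|f_{m_k}(x)-f(x)|<\epsilon$ for all $k\geq k_0$. Consequently, if $n\in A(x,\epsilon)$ and $n\in M$, then $n\in\{m_1,m_2,\ldots,m_{k_0-1}\}$; while if $n\in A(x,\epsilon)$ and $n\notin M$, then $n\in H$. Hence $A(x,\epsilon)\subseteq H\cup\{m_1,\ldots,m_{k_0-1}\}$.

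To conclude, I would note that $H\in I$ and that $\{m_1,\ldots,m_{k_0-1}\}$ is finite, hence in $I$ by admissibility of $I$; so their union lies in $I$, and by heredity (condition (ii) in the definition of an ideal) the subset $A(x,\epsilon)$ lies in $I$. As $x\in X$ and $\epsilon>0$ were arbitrary, this is precisely weak* $I$-convergence of $\{f_n\}$ to $f$. The only genuine subtlety here — and the point I would flag explicitly — is the appeal to admissibility of $I$ in order to absorb the finite ``initial'' block $\{m_1,\ldots,m_{k_0-1}\}$; without that hypothesis the inclusion above does not force $A(x,\epsilon)\in I$. Beyond that the argument is routine: no functional-analytic machinery is needed past the definition of pointwise scalar convergence, and no coupling between different choices of $x$ arises, since the threshold $k_0$ is permitted to depend on $x$.
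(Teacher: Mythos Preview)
Your proof is correct and follows essentially the same route as the paper's own argument: both pass to $H=\mathbb{N}\setminus M\in I$, fix $x$ and $\epsilon$, obtain a threshold index along $M$, and conclude via the inclusion $A(x,\epsilon)\subseteq H\cup\{m_1,\ldots,m_{k_0-1}\}$ together with admissibility of $I$. Your explicit remark that admissibility is the one nontrivial hypothesis being invoked is well placed, and matches the paper's own appeal to it.
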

\begin{proof}
By assumption, there exists a set $H\in I$ such that for $M=\mathbb{N}\setminus H=\{m_1<m_2<...<m_k<...\}$ we have  $\displaystyle{\lim_{k\rightarrow \infty}}f_{m_k}(x)=f(x)$ for each $x\in X$. Now let $\epsilon>0$ and for this there exists an $N(\epsilon,x)\in \mathbb{N}$ such that $|f_{m_k}(x)-f(x)|< \epsilon$ for each $k> N(\epsilon,x)$. Then we have $\{n\in\mathbb{N}:|f_n(x)-f(x)|\geq \epsilon\}\subset H\cup {m_1,m_2,...,m_{N(\epsilon,x)}}$. Since $I$ is an admissible ideal so right-hand side of the above relation belongs to $I$. Hence the result. 
\end{proof}
\begin{remark}
We can reformulate the definition\ref{3} in the following way: if there exists a set $M\in F(I)$ such that the sequence $\{g_n\}\in X^*$ given by
\[g_n=\left\{\begin {array}{ll}
        f_n & \mbox{if $n\in M$} \\
		f & \mbox{if $n\notin M$}
		\end{array}
		\right. \]
is weak* convergent to  $f$.
\end{remark}
\begin{definition}
 Let $X$ be a normed space with a separable dual $X^*$ and $I,K$ be two ideals on $\mathbb{N}$. A sequence $\{f_n\}_{n\in \mathbb{N}}$ in $X^*$ is said to be weak* $I^K$-convergent to $f\in X^*$ if there exists a set $M\in F(I)$ such that the sequence $\{g_n\}\in X^*$ given by
\[g_n=\left\{\begin {array}{ll}
        f_n & \mbox{if $n\in M$} \\
		f & \mbox{if $n\notin M$}
		\end{array}
		\right. \]
is weak* $K$-convergent to  $f$ and we write $w^*$-$I^K$-$\displaystyle{\lim_{n\rightarrow\infty}}{f_n}=f$
\end{definition}
\begin{theorem}
If $I\vee K$ is a non-trivial ideal on $\mathbb{N}$ and $X$ is normed space with dual $X^*$ then weak* $I^K$-limit of a sequence $\{f_n\}_{n\in \mathbb{N}}$ in $X^*$ is unique.
\end{theorem}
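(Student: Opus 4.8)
The plan is to transcribe, essentially line for line, the proof of Theorem~\ref{a}, with the one structural change that here the sequence lives in $X^*$ and weak* convergence is tested against vectors $x\in X$ rather than functionals; consequently the separation step no longer needs Hahn--Banach. I would argue by contradiction: suppose $\{f_n\}_{n\in\mathbb N}$ has two distinct weak* $I^K$-limits $f$ and $g$. Since $f\neq g$ as elements of $X^*$, there is by definition some $x\in X$ with $f(x)\neq g(x)$; fix such an $x$ and set $\epsilon=\frac{|f(x)-g(x)|}{3}>0$.

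Next I would unwind the two hypotheses. Weak* $I^K$-convergence to $f$ furnishes a set $A_1\in F(I)$ such that the spliced sequence $\{g_n^{(1)}\}$, equal to $f_n$ on $A_1$ and to $f$ on $\mathbb N\setminus A_1$, is weak* $K$-convergent to $f$; hence $\{n\in\mathbb N:|g_n^{(1)}(x)-f(x)|<\epsilon\}\in F(K)$. Splitting this set over $A_1$ and its complement (the indices outside $A_1$ contribute the value $0<\epsilon$ and may be absorbed harmlessly) and simplifying exactly as in the proof of Theorem~\ref{a} yields $A_1\setminus B_1\in K$, where $B_1=\{n\in A_1:|f_n(x)-f(x)|<\epsilon\}$. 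Symmetrically, weak* $I^K$-convergence to $g$ gives $A_2\in F(I)$ with $A_2\setminus B_2\in K$, where $B_2=\{n\in A_2:|f_n(x)-g(x)|<\epsilon\}$.

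Then I would combine the two: from $(A_1\setminus B_1)\cup(A_2\setminus B_2)\in K$ and the inclusion $(A_1\cap A_2)\setminus(B_1\cap B_2)\subseteq(A_1\setminus B_1)\cup(A_2\setminus B_2)$ we get $(A_1\cap A_2)\setminus(B_1\cap B_2)\in K$. By the triangle inequality $B_1\cap B_2=\emptyset$: if $n$ lay in both, then $3\epsilon=|f(x)-g(x)|\leq|f(x)-f_n(x)|+|f_n(x)-g(x)|<2\epsilon$, a contradiction. Hence $A_1\cap A_2\in K$, i.e. $\mathbb N\setminus(A_1\cap A_2)\in F(K)$, while simultaneously $A_1\cap A_2\in F(I)$ since $A_1,A_2\in F(I)$. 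Because $I\vee K$ is non-trivial, $F(I\vee K)=\{G\cap H:G\in F(I),\,H\in F(K)\}$ is a genuine filter, so it would contain $(A_1\cap A_2)\cap\bigl(\mathbb N\setminus(A_1\cap A_2)\bigr)=\emptyset$, which is impossible. This contradiction proves uniqueness.

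I do not expect any genuine obstacle here; the argument is a faithful copy of Theorem~\ref{a}. The only point meriting a moment's care is the deduction of $A_1\setminus B_1\in K$ from the weak* $K$-convergence of the spliced sequence, since one must keep track of the values of $g_n^{(1)}$ off $A_1$; but this is precisely the manipulation already performed in the proof of Theorem~\ref{a}, so it can simply be invoked.
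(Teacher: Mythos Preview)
Your proposal is correct and matches the paper's approach exactly: the paper itself offers no detailed proof here, stating only that it ``is parallel to proof of theorem~\ref{a} with slight modification,'' and your write-up is precisely that transcription, with the expected modification that the separation of $f\neq g$ in $X^*$ is witnessed directly by some $x\in X$ rather than via Hahn--Banach.
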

The proof is parallel to proof of theorem\ref{a} with slight modification.
\begin{theorem}\label{c}
Let $X$ be a normed space. If a sequence $\{f_k\}$ in $X^*$ is weak $I^K$-convergent to $f\in X^*$ then it is weak* $I^K$-convergent. 
\end{theorem}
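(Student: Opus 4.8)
The plan is to exploit the canonical embedding of $X$ into its bidual $X^{**}$. For each $x\in X$ let $\hat{x}$ denote the evaluation functional on $X^*$ defined by $\hat{x}(g)=g(x)$; this is a bounded linear functional on $X^*$, i.e. $\hat{x}\in X^{**}$ with $\|\hat{x}\|\le\|x\|$. Hence testing against the family $\{\hat{x}:x\in X\}\subseteq X^{**}$ is weaker than testing against all of $X^{**}$, which is the classical reason why weak convergence in a dual space implies weak* convergence. The whole argument simply transports this observation through the $I^K$-machinery, and in particular should use the same witness set $M\in F(I)$ for both notions.

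First I would unwind the hypothesis using the reformulation preceding the definition of weak* $I^K$-convergence: since $\{f_k\}$ is weak $I^K$-convergent to $f$ (as a sequence in the normed space $X^*$, with dual $X^{**}$), there is a set $M\in F(I)$ such that the sequence $\{g_n\}\subset X^*$ defined by $g_n=f_n$ for $n\in M$ and $g_n=f$ for $n\notin M$ is weak $K$-convergent to $f$ in $X^*$. I then claim this same $M$ witnesses weak* $I^K$-convergence of $\{f_k\}$, so it suffices to show $\{g_n\}$ is weak* $K$-convergent to $f$.

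To verify this, fix $x\in X$ and $\epsilon>0$ and apply weak $K$-convergence of $\{g_n\}$ to the particular element $\hat{x}\in X^{**}$: the set $\{n\in\mathbb{N}:|\hat{x}(g_n)-\hat{x}(f)|\ge\epsilon\}$ belongs to $K$. Since $\hat{x}(g_n)-\hat{x}(f)=g_n(x)-f(x)$, this is precisely the statement that $\{n\in\mathbb{N}:|g_n(x)-f(x)|\ge\epsilon\}\in K$. As $x\in X$ and $\epsilon>0$ were arbitrary, $\{g_n\}$ is weak* $K$-convergent to $f$, and therefore $\{f_k\}$ is weak* $I^K$-convergent to $f$ with witness $M$.

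There is essentially no deep obstacle here; the only points demanding care are bookkeeping ones: ensuring the same $M\in F(I)$ serves both definitions, and passing cleanly between the "auxiliary sequence" formulation and the "restricted $K|_M$-convergence" formulation. One may additionally remark that the separability assumption on $X^*$ appearing in the definition of weak* $I^K$-convergence is not used anywhere in the argument.
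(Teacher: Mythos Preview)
Your proof is correct and follows essentially the same route as the paper: both use the canonical embedding $x\mapsto\hat{x}$ of $X$ into $X^{**}$, keep the same witness $M\in F(I)$, and deduce weak* $K$-convergence of the auxiliary sequence $\{g_n\}$ by specializing the weak $K$-convergence statement to functionals of the form $\hat{x}$. Your closing remark that the separability hypothesis on $X^*$ plays no role is accurate and worth noting.
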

\begin{proof}
By our assumption, $w-I^K-\lim f_k=f$ then there exists a set $M\in F(I)$ such that the sequence $\{g_k\}\in X^*$ given by
\[g_k=\left\{\begin {array}{ll}
        f_k & \mbox{if $k\in M$} \\
		f & \mbox{if $k\notin M$}
		\end{array}
		\right. \]
is weak $K$-convergent to  $f$. Then for every $h\in X^{**}$ and $\epsilon>0$, we have $\{k:|h(g_k)-h(f)|\geq \epsilon\}\in K$. Let $x\in X$ and $F_x=C(x)$ where $C:X\rightarrow X^{**}$ is the canonical mapping we have $F_x(g_k)=g_k(x)$ and $F_x(f)=f(x)$ for every $x\in X$. So in particular for each $x\in X$,$\{k:|F_x(g_k)-F_x(f)|\geq \epsilon\}\in K$ i.e. $\{k:|g_k(x)-f(x)|\geq \epsilon\}\in K$. So the sequence $\{g_k\}$ is weak*
$K$-convergent to $f$. Hence the result.
\end{proof}
\begin{theorem}\label{d}
Let $X$ be a reflexive normed space with dual $X^{*}$. If a sequence $\{f_k\}$ in $X^*$ is weak* $I^K$-convergent to $f\in X^*$ then it is weak $I^K$-convergent to $f$. 
\end{theorem}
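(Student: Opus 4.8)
The plan is to exploit reflexivity by identifying $X^{**}$ with $X$ through the canonical embedding $C\colon X\to X^{**}$, which for a reflexive space is a surjective isometry. First I would unpack the hypothesis: since $\{f_k\}$ is weak* $I^K$-convergent to $f$, there is a set $M\in F(I)$ such that the sequence $\{g_k\}\subset X^*$ defined by $g_k=f_k$ for $k\in M$ and $g_k=f$ for $k\notin M$ is weak* $K$-convergent to $f$; that is, for every $x\in X$ and every $\epsilon>0$ the set $\{k:|g_k(x)-f(x)|\ge\epsilon\}$ lies in $K$.

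Next I would reinterpret this condition as weak $K$-convergence of $\{g_k\}$ in the normed space $X^*$, whose dual is $X^{**}$. For $x\in X$ write $F_x=C(x)\in X^{**}$, so that $F_x(g_k)=g_k(x)$ and $F_x(f)=f(x)$. Then the set above equals $\{k:|F_x(g_k)-F_x(f)|\ge\epsilon\}\in K$. By reflexivity every $h\in X^{**}$ has the form $h=F_x$ for some $x\in X$, hence for every $h\in X^{**}$ and every $\epsilon>0$ we get $\{k:|h(g_k)-h(f)|\ge\epsilon\}\in K$. This is precisely the assertion that $\{g_k\}$ is weak $K$-convergent to $f$ in $X^*$.

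Finally, since the same $M\in F(I)$ witnesses that the modified sequence $\{g_k\}$ is weak $K$-convergent to $f$, the definition of weak $I^K$-convergence is satisfied, and therefore $w$-$I^K$-$\lim f_k=f$. The argument is essentially the computation in Theorem \ref{c} read backwards, the only new ingredient being the surjectivity of the canonical map, so I do not anticipate a genuine obstacle; the lone point worth a remark is that the definition of weak* $I^K$-convergence was stated for $X$ with separable dual, and for reflexive $X$ this already forces $X$ to be separable, so the hypotheses are consistent and nothing further is required.
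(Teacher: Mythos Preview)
Your proof is correct and follows essentially the same route as the paper: unpack the weak* $I^K$-hypothesis to obtain $M\in F(I)$ and the modified sequence $\{g_k\}$ that is weak* $K$-convergent to $f$, then use surjectivity of the canonical map $C\colon X\to X^{**}$ (reflexivity) to upgrade weak* $K$-convergence to weak $K$-convergence, with the same $M$ witnessing weak $I^K$-convergence. Your closing remark about separability is an extra observation not present in the paper's proof, but it does not affect the argument.
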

\begin{proof}
By our assumption, $w*-I^K-\lim f_k=f$. So there exists a set $M\in F(I)$ such that the sequence $\{g_k\}\in X^*$ given by
\[g_k=\left\{\begin {array}{ll}
        f_k & \mbox{if $k\in M$} \\
		f & \mbox{if $k\notin M$}
		\end{array}
		\right. \]
is weak* $K$-convergent to  $f$. Then for  each $x\in X$ and $\epsilon(>0)$ the set $\{k\in \mathbb{N}:|g_k(x)-f(x)|\geq \epsilon\}\in K$. Let $F\in X^{**}$ then $F=C(x_0)$ for some $x_0\in X$ where $C:X\rightarrow X^{**}$ is the canonical mapping. We have in particular $\{k\in \mathbb{N}:|g_k(x_0)-f(x_0)|\geq \epsilon\}\in K$ since$F(g_k)=g_k(x_0)$ and $F(f)=f(x_0)$. We have $\{k\in \mathbb{N}:|F(g_k)-F(f)|\geq \epsilon\}\in K$ for each $\epsilon(>0)$ and $F\in X^{**}$. So the sequence $\{g_k\}$ is weak
$K$-convergent to $f$. Hence the result.
\end{proof}
\section{weak and weak* $I^K$-limit points}
In this last part, we introduce weak and weak* $I^K$-limit points of sequences and sequence of functionals respectively. First we define weak $I$-limit point of a sequence.
\begin{definition}(cf. \cite{19})
Let $X$ be a normed space and a sequence $\{x_n\}$ be a sequence in $X$. Then $y\in X$ is called an weak $I$-limit point of $\{x_n\}$ if there exists a set $M\notin I$ such that the sequence $\{y_n\}_{n\in \mathbb{N}}\in X$ defined by
\[y_n=\left\{\begin {array}{ll}
        x_n & \mbox{if $n\in M$} \\
		y & \mbox{if $n\notin M$}
		\end{array}
		\right. \]
is weak-convergent to $y$. 
\end{definition} 
\begin{definition}
Let $X$ be a normed space and $I,K$ be two ideals on $\mathbb{N}$. Then $y\in X$ is called an weak $I^K$-limit point of a sequence $\{x_n\}$ if there exists a set $M\notin I,K$ such that the sequence $\{y_n\}_{n\in \mathbb{N}}\in X$ defined by
\[y_n=\left\{\begin {array}{ll}
        x_n & \mbox{if $n\in M$} \\
		y & \mbox{if $n\notin M$}
		\end{array}
		\right. \]
is weak $K$-convergent to $y$. 
\end{definition}
We denote $I(L_w)$ and $I^K(L_w)$ the collection of all weak $I$ and weak $I^K$-limit points of $x_n\in X$.
\begin{theorem}\label{b}
If $K$ is an admissible ideal and $K\subset I$ then $I(L_w)\subset I^K(L_w)$. 
\end{theorem}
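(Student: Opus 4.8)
The plan is to show that every weak $I$-limit point of $\{x_n\}$ is automatically a weak $I^K$-limit point by reusing the very same witnessing subset of $\mathbb{N}$, with no modification. So I would fix an arbitrary $y\in I(L_w)$; by definition there is a set $M\notin I$ such that the sequence $\{y_n\}$ given by $y_n=x_n$ for $n\in M$ and $y_n=y$ for $n\notin M$ is weak-convergent to $y$. The claim is that this same $M$ already witnesses $y\in I^K(L_w)$, so the whole proof amounts to verifying the two clauses in the definition of a weak $I^K$-limit point for this $M$.

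First I would check the membership condition. The definition of a weak $I^K$-limit point requires $M\notin I$ and $M\notin K$. The hypothesis $K\subset I$ gives $K\subseteq I$, so if $M$ belonged to $K$ it would belong to $I$, contradicting $M\notin I$; hence $M\notin K$ (equivalently, since $K\subseteq I$ forces $I\vee K=I$, the condition $M\notin I,K$ reduces to $M\notin I$, which we have). Second I would upgrade the mode of convergence: the same sequence $\{y_n\}$ is weak-convergent to $y$, and since $K$ is admissible, the observation recorded in the note following the definition of weak $I$-convergence (applied with the ideal $K$ in place of $I$) yields that weak convergence implies weak $K$-convergence with the same limit. Thus $\{y_n\}$ is weak $K$-convergent to $y$, so $M$ witnesses $y\in I^K(L_w)$. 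As $y$ was arbitrary, $I(L_w)\subset I^K(L_w)$.

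There is no substantial obstacle here; the content is entirely in locating where each hypothesis is used. The inclusion $K\subset I$ is needed (and only needed) to ensure that the witnessing set $M$, which is automatically outside $I$, is also outside $K$ — without this, $M$ could fail the $M\notin K$ clause and the argument would not produce a valid witness. Admissibility of $K$ is needed (and only needed) to pass from ordinary weak convergence of the modified sequence $\{y_n\}$ to its weak $K$-convergence; for a non-admissible $K$ the implication ``weak convergence $\Rightarrow$ weak $K$-convergence'' can fail and the step would break. If desired one could add a remark that the reverse inclusion $I^K(L_w)\subset I(L_w)$ need not hold, since a modified sequence that is merely weak $K$-convergent to $y$ need not be weak-convergent to $y$.
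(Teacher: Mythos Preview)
Your proposal is correct and follows essentially the same route as the paper: take the witnessing set $M\notin I$ from the definition of weak $I$-limit point, use $K\subset I$ to get $M\notin K$, and use admissibility of $K$ to upgrade weak convergence of $\{y_n\}$ to weak $K$-convergence. The paper spells out the admissibility step explicitly (the set $\{n:|f(y_n)-f(y)|\ge\epsilon\}$ is finite, hence in $K$), while you invoke the earlier note, but the argument is identical.
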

\begin{proof}
Let $y\in I(L_w)$, so there exists a set $M\notin I$ such that the sequence $\{y_n\}$ give by
\[y_n=\left\{\begin {array}{ll}
        x_n & \mbox{if $n\in M$} \\
		y & \mbox{if $n\notin M$}
		\end{array}
		\right. \]
is weak-convergent to $y$. Then the sequence of scalars $\{f(y_n)\}$ converges to $f(y)$ for all $f\in X^{*}$ i.e. $\{n:|f(y_n)-f(y)|\geq \epsilon\}$ is a finite set. So $\{n:|f(y_n)-f(y)|\geq \epsilon\}\in K$ as $K$ is an admissible ideal. Therefore $\{y_n\}$ is weak $K$-convergent sequence. Again $M\notin I$ and $K\subset I$, so $M\notin
I,K$. Thus $y$ is weak $I^K$-limit point of $x_n$. Hence the theorem.
\end{proof}
In the similar way we can set the definition of weak* $I^K$-limit points for the sequence of functionals.
\begin{definition}
Let $X$ be a normed space with its dual $X^*$ and $\{f_n\}$ be a sequence in $X^*$. Then $h\in X^*$ is called an weak* $I$-limit point of $\{f_n\}$ if there exists a set $M\notin I$ such that the sequence $\{g_n\}_{n\in \mathbb{N}}\in X^*$ defined by
\[g_n=\left\{\begin {array}{ll}
        f_n & \mbox{if $n\in M$} \\
		h & \mbox{if $n\notin M$}
		\end{array}
		\right. \]
is weak*-convergent to $h$. 
\end{definition} 
\begin{definition}
Let $X$ be a normed space with its dual $X^*$ and $I,K$ be two ideals on $\mathbb{N}$. Then $h\in X^*$ is called an weak* $I^K$-limit point of $\{f_n\}\subset X^*$ if there exists a set $M\notin I,K$ such that the sequence $\{g_n\}_{n\in \mathbb{N}}\in X^*$ defined by
\[g_n=\left\{\begin {array}{ll}
        f_n & \mbox{if $n\in M$} \\
		h & \mbox{if $n\notin M$}
		\end{array}
		\right. \]
is weak* $K$-convergent to $h$.
\end{definition}
We denote $I(L_w*)$ and $I^K(L_w*)$ the collection of all weak* $I$ and $I^K$-limit points of the sequence ${f_n}\in X^{*}$.
\begin{theorem}
If $K$ is an admissible ideal and $K\subset I$ then $I(L_w*)\subset I^K(L_w*)$. 
\end{theorem}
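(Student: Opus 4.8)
The plan is to mirror the argument of Theorem~\ref{b} verbatim, replacing the role of functionals $f\in X^*$ by evaluation functionals $x\in X$ and weak convergence by weak* convergence. So I would start by taking an arbitrary $h\in I(L_{w^*})$, i.e.\ a weak* $I$-limit point of $\{f_n\}$. By definition there is a set $M\notin I$ such that the sequence $\{g_n\}\subset X^*$ defined by $g_n=f_n$ for $n\in M$ and $g_n=h$ for $n\notin M$ is weak*-convergent to $h$.

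Next I would unpack what weak* convergence of $\{g_n\}$ to $h$ means: for each fixed $x\in X$ the scalar sequence $\{g_n(x)\}$ converges to $h(x)$, hence for every $\epsilon>0$ the set $\{n\in\mathbb N:|g_n(x)-h(x)|\ge\epsilon\}$ is finite. Since $K$ is an admissible ideal it contains every finite subset of $\mathbb N$, so this set lies in $K$ for each $x\in X$ and each $\epsilon>0$. By the definition of weak* $K$-convergence, this shows that $\{g_n\}$ is weak* $K$-convergent to $h$.

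Finally I would verify the index-set condition: $M\notin I$ by hypothesis, and since $K\subset I$, if $M$ were in $K$ it would also be in $I$, a contradiction; hence $M\notin K$ as well, so $M\notin I,K$. Combining this with the weak* $K$-convergence of $\{g_n\}$ just established, $h$ satisfies the definition of a weak* $I^K$-limit point of $\{f_n\}$, i.e.\ $h\in I^K(L_{w^*})$. As $h$ was arbitrary, $I(L_{w^*})\subset I^K(L_{w^*})$.

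I do not anticipate a genuine obstacle here; the only points requiring care are the two places where admissibility and the inclusion $K\subset I$ are used — namely that finiteness of the exceptional sets forces membership in $K$, and that $M\notin I$ together with $K\subset I$ forces $M\notin K$ — exactly as in the proof of Theorem~\ref{b}. Everything else is a direct transcription using the canonical identification of $x\in X$ with the evaluation functional $F_x=C(x)\in X^{**}$ if one prefers to phrase weak* convergence through $X^{**}$.
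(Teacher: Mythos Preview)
Your proposal is correct and is precisely the argument the paper has in mind: the paper itself gives no separate proof but simply states that the proof is parallel to that of Theorem~\ref{b}, which is exactly the transcription you carry out. Every step matches the weak case with $f\in X^*$ replaced by $x\in X$ and weak convergence replaced by weak* convergence.
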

The proof is parallel to proof of the theorem \ref{b}.
\begin{theorem}
Let $X$ be a normed space with its dual $X^*$ . If $h\in X^{*}$ be weak $I^K$-limit point of a sequence $\{f_n\}\subset X^{*}$ then $h$ is also weak* $I^K$-limit point.
\end{theorem}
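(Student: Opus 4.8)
The plan is to reuse the canonical-embedding mechanism that underlies Theorem~\ref{c}, taking care that the witnessing set is carried over unchanged. First I would unwind the hypothesis: since $h$ is a weak $I^K$-limit point of $\{f_n\}$, there is a set $M\notin I$ with $M\notin K$ such that the auxiliary sequence $\{g_n\}\subset X^*$ defined by $g_n=f_n$ for $n\in M$ and $g_n=h$ for $n\notin M$ is weak $K$-convergent to $h$; that is, for every $\Phi\in X^{**}$ and every $\epsilon>0$ we have $\{n\in\mathbb{N}:|\Phi(g_n)-\Phi(h)|\geq\epsilon\}\in K$.

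The key step is to pass from $X^{**}$ down to $X$ through the canonical map $C:X\rightarrow X^{**}$, exactly as in the proof of Theorem~\ref{c}. Fix $x\in X$ and set $F_x=C(x)\in X^{**}$, so that $F_x(g_n)=g_n(x)$ and $F_x(h)=h(x)$. Applying the membership above with $\Phi=F_x$ yields $\{n\in\mathbb{N}:|g_n(x)-h(x)|\geq\epsilon\}\in K$ for every $\epsilon>0$. As $x\in X$ was arbitrary, this is precisely the statement that $\{g_n\}$ is weak* $K$-convergent to $h$.

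Finally, I would observe that the very same set $M\notin I,K$ now witnesses that $h$ is a weak* $I^K$-limit point of $\{f_n\}$: the auxiliary sequence $\{g_n\}$ appearing in the definition of weak* $I^K$-limit point is identical to the one just used, and we have shown it is weak* $K$-convergent to $h$. This completes the argument.

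I do not expect a genuine obstacle here, since the analytic content is just Theorem~\ref{c} transplanted to the limit-point setting. The only point needing a line of care is that the limit-point definitions quantify over $M\notin I,K$ rather than over $M\in F(I)$, so one must be explicit that the $M$ supplied by the weak $I^K$-limit-point hypothesis is reused verbatim and not replaced by some other set.
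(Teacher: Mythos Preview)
Your proposal is correct and follows essentially the same route as the paper: the paper's proof simply cites Theorem~\ref{c} to pass from weak $K$-convergence of $\{g_n\}$ to weak* $K$-convergence and then concludes, whereas you unwind that citation via the canonical embedding $C:X\to X^{**}$ explicitly. Your version is in fact a bit more careful, since you keep track of the condition $M\notin I,K$ (the paper's proof only writes $M\notin I$, though the same $M$ is reused so nothing is lost).
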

\begin{proof}
Let $y$ be weak $I^K$-limit point of $\{f_n\}\in X^{*}$ then there exists a set $M\notin I$ such that the sequence $\{g_n\}_{n\in \mathbb{N}}\in X^{*}$ defined by
\[g_n=\left\{\begin {array}{ll}
        f_n & \mbox{if $n\in M$} \\
		h & \mbox{if $n\notin M$}
		\end{array}
		\right. \]
is weak $K$-convergent to $y$. Again by theorem \ref{c} we get $\{g_n\}$ is weak* $K$-convergent to $h$. Hence $h$ is weak* $I^K$-limit point.
\end{proof}
\begin{remark}
By the theorem \ref{d} we get weak* $K$-convergence implies weak $K$-convergence when $X$ is reflexive normed space. Therefore converse of above theorem holds when $X$ is a reflexive normed space.
\end{remark}

\section{Acknowledgment}
The second author is grateful to the University of Burdwan, W.B., India for providing him State Fund Fellowship during the preparation of this work.



\begin{thebibliography}{9}
\bibitem{1} I. Bala, \textit{On weak* statistical convergence of sequences of functionals}, In-ternational J. Pure Appl. Math., \textbf{70(5)} (2011), 647-653

\bibitem{2} V.K. Bhardwaj, I. Bala, \textit{On weak statistical convergence}, Inter-national J. Math. Math. Sc. (2007), Article ID 38530, 9 pages,doi:10.1155/2007/38530

\bibitem{2.1} V.K. Bhardwaj, I. Bala, \textit{Weak ideal convergence in $l_p$ spaces }, Inter-national J. of pure and applied Math., \textbf{75(2)} (2012), 247-256.

\bibitem{3} M. Balcerzak and K. Dems and A. Komisarski, \textit{Statistical convergence and ideal convergence for sequences of functions}, J. Math Anal. Appl. \textbf{328(1)} (2007), 715-729.

\bibitem{4} A.K. Banerjee and A. Banerjee, A Note on $I$-Convergence and $I^*$-Convergence of Sequence and Nets in Topological Spaces, \textit{Mathematicki Vesnik} \textbf{67(3)} (2015), 212-221.

\bibitem{5} A.K. Banerjee and A. Banerjee, $I$-convergence classes of sequences and nets in topological spaces, \textit{Jordan Journal of Mathematics and Statistics} \textbf{11(1)} (2018), 13-31.

\bibitem{6} A.K. Banerjee and R. Mondal, A note on convergence of double sequences in topological spaces, \textit{Mathematicki Vesnik} \textbf{69(2)} (2017), 144-152.

\bibitem{9} A.K. Banerjee and A. Banerjee, $I$-completeness in funtion spaces, \textit{arXive: 1704.05279v1}(2017)

\bibitem{10} A.K. Banerjee and M. Paul, A Note on $I^K$ and $I^{K^*}$-Convergence in Topological Spaces, \textit{arXive: 1807.11772v1}(2018)

\bibitem{11} A.K. Banerjee and M. Paul, Strong-$I^K$-Convergence in Probabilistic metric Spaces, \textit{arXive: 1808.03268v1}(2018)

\bibitem{12} J. Connor, M. Ganichev, V. Kadets, \textit{A characterization of Banach spaces with seperable duals via weak statistical convergence}, J. Math. Anal. Appl.,
\textbf{244(1)} (2000), 251-261.



\bibitem{13} P. Das, E. Savas and S. K. Ghosal, \textit{On generalizations of certain summability methods using ideals}, Appl. Math. Lett. \textbf{24} (2011), 1509-1514.

\bibitem{15} P. Das,P. Kostyrko, P. Malik, W. Wilczy\' nski, \textit{$I$ and $I^*$\text -convergence of double sequences}, Math. Slov. \textbf{58(5)} (2008), 605-620.

\bibitem{16} P. Das, M. Sleziak, V. Toma, \textit{$I^K$-Cauchy functions}, Topology and its Applications \textbf{173} (2014), 9-27.




\bibitem{17} H. Fast, \textit{Sur la convergence ststistique}, Colloq. Math. \textbf{2} (1951), 241-244.




\bibitem{18} J.A. Fridy, \textit{On statistical convergence}, Analysis \textbf{11} (1991) 59-66.

\bibitem{19} J.A.Fridy, \textit{Statistical limit points}, Proc. Amer. Math. Soc. \textbf{118} (1993), 1187-1192.




\bibitem{20} B.K. Lahiri, P. Das, \textit{$I$ and $I^*$-convergence in topological spaces}, Math. Bohem. \textbf{2} (2005), 153-160.


\bibitem{21} P. Kostyrko, M. Ma\u caj, T.\u Sal\' at, \textit{Statistical convergence and $I$-convergence}, Unpublished, http://thales.doa.fmph.uniba.sk/macaj/ICON.pdf.

\bibitem{22} Pavel Kostyrko,Tibor \u Sal\' at, Wladyslaw Wilczy\' nski, \textit{$I$-convergence}, Real Analysis Exchange \textbf{26(2)} (2000/2001) ,669-686.



\bibitem{23} M. Macaj, T.\u Sal\' at, \textit{statistical convergence of subsequences of a given sequence}, Math. Bohem. \textbf{126} (2001), 191-208.

\bibitem{24} M. Macaj, M. Sleziak, \textit{$I^K$-convergence}, Real Analysis Exchange \textbf{36(1)} (2010/2011), 177-194.


\bibitem{24.1} S.K. Pal, \textit{On $I$ and $I^K$-Cauchy nets and completeness}, Sarajevo Journal of Mathematics \textbf{10(23)} (2014), 247-255.

\bibitem{25} S. K. Pal, P. Malik, \textit{On a criterion of weak ideal convergence and some further results on weak ideal summability}, South Asian Bulletin of Mathematics, \textbf{39} (2015), 685-694.

\bibitem{26} S. Pehlivan, C. Sencimen, Z.H. Yaman, \textit{On weak ideal convergence in
normed spaces}, Journal of Interdisciplinary Mathematics, \textbf{13(2)} (2010), 153-162.

\bibitem{27} Tibor \u Sal\' at, \textit{On statistically convergent sequence of real numbers}, Mathematica Slovaca \textbf{30(2)} (1980), 139-150.

\bibitem{28} H. Steinhaus, \textit{Sur la convergence ordinaire et la convergence asymptotique}, Colloq. Math. \textbf{2} (1951) 73-74.


\end{thebibliography}
\end{document}